\documentclass[a4paper,10pt]{amsart}

\usepackage{amsmath}
\usepackage{amsthm}
\usepackage{amssymb}
\usepackage{amsfonts}
\usepackage{mathrsfs}  
\usepackage{enumitem} 
\usepackage{xcolor}
\usepackage{soul}
\usepackage[bookmarks=false,hyperindex,pdftex,colorlinks,citecolor=blue,urlcolor=cyan]{hyperref}
\usepackage[a4paper,lmargin=3cm,rmargin=3cm,tmargin=4cm,bmargin=4cm,marginparwidth=2.8cm,marginparsep=1mm]{geometry} 

\DeclareMathOperator{\lspan}{span}                          
\DeclareMathOperator{\diam}{diam}                           
\DeclareMathOperator{\Lip}{Lip}                             

\newcommand{\NN}{\mathbb{N}}                                
\newcommand{\ZZ}{\mathbb{Z}}                                
\newcommand{\RR}{\mathbb{R}}                                
\newcommand{\ep}{\varepsilon}

\newcommand{\abs}[1]{\left|{#1}\right|}                     
\newcommand{\pare}[1]{\left({#1}\right)}                    
\newcommand{\set}[1]{\left\{{#1}\right\}}                   
\newcommand{\cl}[1]{\overline{#1}}                          
\newcommand{\compl}{\stackrel{c}{\hookrightarrow}}          

\newcommand{\lipfree}[1]{\mathcal{F}({#1})}                 

\renewcommand{\leq}{\leqslant}
\renewcommand{\geq}{\geqslant}

\theoremstyle{plain}
\newtheorem{theorem}{Theorem}

\newtheorem{proposition}[theorem]{Proposition}

\newtheorem*{claim*}{Claim}

\theoremstyle{definition}
\newtheorem*{definition*}{Definition}

\theoremstyle{remark}

\begin{document}

\title{Lipschitz-free spaces over non-porous subsets of $\mathbb{R}^n$}

\author[R. J. Aliaga]{Ram\'on J. Aliaga}
\address[R. J. Aliaga]{Instituto Universitario de Matem\'atica Pura y Aplicada,
Universitat Polit\`ecnica de Val\`encia,
Camino de Vera S/N,
46022 Valencia, Spain}
\email{ramon.aliaga@upv.es}

\date{} 


\begin{abstract}
We prove that the Lipschitz-free space $\mathcal{F}(M)$ contains a complemented copy of $\mathcal{F}(\mathbb{Z}^n)$ whenever $M\subset\mathbb{R}^n$ is not porous. Consequently, if $M\subset\mathbb{R}^n$ is uniformly discrete and not porous then $\mathcal{F}(M)$ is isomorphic to $\mathcal{F}(\mathbb{Z}^n)$.
\end{abstract}

\subjclass[2020]{46E15, 46B03}

\keywords{Lipschitz-free space, porous set, uniformly discrete space}

\maketitle
\thispagestyle{empty} 


\section{Introduction and main result}

Let $(M,d)$ be a complete metric space and fix a base point $0\in M$. We define the \emph{Lipschitz space} $\Lip_0(M)$ as the Banach space of real-valued Lipschitz functions $f:M\to\RR$ such that $f(0)=0$, endowed with the (best) Lipschitz constant as the norm. This space has a canonical isometric predual, the \emph{Lipschitz-free space} $\lipfree{M}$, that can be obtained as follows: let $\delta(x):f\mapsto f(x)$ denote the evaluation functional on $x\in M$, which is an element of $\Lip_0(M)^*$, and set
$$
\lipfree{M} = \cl{\lspan}\set{\delta(x)\,:\,x\in M}.
$$
We recommend \cite{Weaver2} as a general reference on Lipschitz and Lipschitz-free spaces.

This note focuses on the isomorphic classification of Lipschitz-free spaces over subsets of $\RR^n$ (equivalently, of finite-dimensional Banach spaces). In that setting, Kaufmann proved in \cite{Kaufmann} that $\lipfree{M}$ is isomorphic to $\lipfree{\RR^n}$ whenever $M\subset\RR^n$ has non-empty interior. Attempts to improve on that result have been largely unsuccessful so far; for instance, it is still unknown if the same result holds under the weaker assumption that $M$ has positive $n$-dimensional Lebesgue measure. Perhaps the strongest follow-up to date is given in \cite{Aliaga26}, where it is shown that their duals $\Lip_0(M)$ and $\Lip_0(\RR^n)$ are isomorphic when $M$ is non-porous (hence, in particular, when it has positive measure). The corresponding statement fails for Lipschitz-free spaces, as e.g. $\ZZ^n$ is non-porous in $\RR^n$ but $\lipfree{\ZZ^n}$ is not isomorphic to $\lipfree{\RR^n}$ (the former has the Radon-Nikod\'ym property and the latter does not, by virtue of \cite[Theorem C]{AGPP}).

In this note, we address this issue and obtain information on $\lipfree{M}$, instead of $\Lip_0(M)$, for non-porous $M\subset\RR^n$. Our main result shows that the previous observation is sharp in some sense, as $\lipfree{\ZZ^n}$ is the ``smallest'' possible Lipschitz-free space for such $M$. It also provides a full identification when $M$ is uniformly discrete.

\begin{theorem}\label{th:porous_lipfree}
If $M\subset\RR^n$ is not porous then $\lipfree{M}$ contains a complemented subspace isomorphic to $\lipfree{\ZZ^n}$. If $M$ is moreover uniformly discrete then $\lipfree{M}$ is isomorphic to $\lipfree{\ZZ^n}$.
\end{theorem}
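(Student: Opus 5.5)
Non-porosity of $M$ at some point $x_0\in M$ gives radii $r_k\to 0$ with the following property: for every $\ep>0$ and all sufficiently large $k$, every ball $B(y,\ep r_k)$ with $y\in B(x_0,r_k)$ meets $M$. The plan is to exploit this blow-up structure. Rescale by $1/r_k$ around $x_0$: the sets $M_k=(M-x_0)/r_k$ become $\ep_k$-dense in the unit ball, with $\ep_k\to 0$. Since $\lipfree{\lambda M}$ is isometric to $\lipfree{M}$, each $\lipfree{M_k}$ embeds isometrically into $\lipfree{M}$ via translation and scaling. To produce a copy of $\lipfree{\ZZ^n}$, I would use the known fact that $\lipfree{\ZZ^n}$ is isomorphic to an $\ell_1$-sum $\pare{\bigoplus_m \lipfree{G_m}}_{\ell_1}$ of Lipschitz-free spaces over the finite grids $G_m=\set{-m,\dots,m}^n$, or over rescaled grids $m^{-1}G_m\subset[-1,1]^n$. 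This decomposition can be obtained from Pełczyński decomposition together with the standard $\ell_1$-sum results for free spaces over grids, or from Kalton's decomposition $\lipfree{M}\sim(\bigoplus \lipfree{\text{annuli}})_{\ell_1}$ for uniformly discrete spaces.

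\textbf{Step 1.} For each $m$, choose $k$ with $\ep_k\ll 1/m$. Pick points of $M_k$ within $\ep_k$ of each point of $m^{-1}G_m$. This gives a subset $A_m\subset M$ which, after rescaling, is $(1+O(m\ep_k))$-bi-Lipschitz to $m^{-1}G_m$. Taking $k$ increasing in $m$, the sets $A_m$ live at scales $r_{k_m}\to0$ that shrink geometrically, so they are well separated relative to their diameters; equivalently, one can use annuli around $x_0$.

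\textbf{Step 2.} Show that $\lipfree{\bigcup_m A_m\cup\set{x_0}}$ contains, complementably, the $\ell_1$-sum $\pare{\bigoplus_m\lipfree{A_m}}_{\ell_1}$. This should follow from Kalton-type arguments for sets clustering at a point with rapidly decreasing scales.

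\textbf{Step 3.} Show that $\lipfree{\bigcup_m A_m}$ is complemented in $\lipfree{M}$. This is the main obstacle, since free spaces over subsets are not complemented in general. Here I would exploit that each $A_m$ is an $\ep$-net of a full cube of $M$'s rescaled copy, and build a linear extension operator $\Lip_0(A_m)\to\Lip_0(M)$ that is weak$^*$-continuous, with uniformly bounded norm. Piecewise-linear interpolation on the simplicial subdivision of the grid (Kaufmann's technique) works, and is weak$^*$-continuous because the extension is given by finite linear combinations of point values. The resulting predual projections are then glued across the scales $m$ using a partition of unity subordinate to separated annuli, keeping norms bounded.

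\textbf{Uniformly discrete case.} If $M$ is uniformly discrete, porosity must be interpreted at large scales, with $r_k\to\infty$, and the same construction runs at scales tending to infinity. For the converse embedding, $M$ is $\delta$-separated, so snapping $M$ to the grid $\eta\ZZ^n$ with $\eta<\delta/(2\sqrt n)$ gives a bi-Lipschitz embedding $M\hookrightarrow\ZZ^n$. Lipschitz retractions onto subsets of $\ZZ^n$ are unavailable in general, but Kalton's result that $\lipfree{N}$ is complemented in $\lipfree{\ZZ^n}$-like spaces with the BAP via Lipschitz extension from uniformly discrete subsets of $\RR^n$ should apply: the weak$^*$-continuous extension for uniformly discrete subsets of doubling spaces is due to Lancien–Pernecká. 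So $\lipfree{M}$ is complemented in $\lipfree{\ZZ^n}$. Since $\lipfree{\ZZ^n}\simeq(\bigoplus\lipfree{\ZZ^n})_{\ell_1}$, Pełczyński decomposition then gives $\lipfree{M}\simeq\lipfree{\ZZ^n}$.
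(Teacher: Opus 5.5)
There is a genuine gap, and it is in your very first sentence. Porosity in the statement is the uniform notion: one $\lambda$ must work for every ball $B(x,r)$ of $\RR^n$, at every position and every scale. Its negation only gives, for each $k$, \emph{some} ball $B(p_k,r_k)$ in which $M$ is $\ep_k r_k$-dense with $\ep_k\to 0$. Nothing forces these balls to be centered at a fixed point $x_0\in M$. Nothing forces $r_k\to 0$ either, nor, in the discrete case, concentric balls with $r_k\to\infty$.

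For example, $M=\bigcup_{k\geq1}\set{4^k+j/k : j=0,\dots,k}\subset\RR$ is not porous; look at the balls $[4^k,4^k+1]$. But it is locally finite, hence porous at small scales around each of its points. Since it lies in a half-line, it is also porous at large scales around any fixed point. So the concentric, geometrically shrinking configuration that Steps 1 and 2 rely on is not available in general. That configuration is what gives you well-separation of the $A_m$ via annuli around $x_0$. The same objection applies to the forward embedding $\lipfree{\ZZ^n}\compl\lipfree{M}$, which you also need for Pe\l czy\'nski's method in the uniformly discrete case.

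The missing ingredient is precisely the nontrivial part (b) of Proposition \ref{pr:porous}. It says that one can find witnessing balls $B_n=B(p_n,r_n)$, with arbitrary centers and radii, that are pairwise disjoint and well-separated. The paper then proceeds as follows:
\begin{enumerate}
\item It maps each $B_n$ onto $B(0,2^n)$ by a dilation.
\item It picks points of $M$ within $R/4$ of each point of $\ZZ^n\cap B(0,2^n)$, giving uniformly bi-Lipschitz copies.
\item It combines Proposition \ref{pr:kalton} with Proposition \ref{pr:separation}.
\end{enumerate}
Your Step 2 then comes for free from well-separation, with no clustering argument.

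Your Step 3 is not an obstacle at all. Proposition \ref{pr:extension} gives $\lipfree{N}\compl\lipfree{M}$ for every $N\subset M\subset\RR^n$, so the piecewise-linear interpolation and gluing across annuli can be dropped.

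Your reverse embedding in the uniformly discrete case is correct. Snapping $M$ to $\eta\ZZ^n$ gives $\lipfree{M}\sim\lipfree{N}$ with $N\subset\ZZ^n$, and Proposition \ref{pr:extension} finishes. This is a slightly more direct alternative to the paper's use of the non-porosity of $\ZZ^n$.
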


This extends the main result of \cite{Aliaga26}. As a particular case, we get that $\lipfree{E\times\stackrel{n}{\ldots}\times E}$ is isomorphic to $\lipfree{\ZZ^n}$ whenever $E\subset\RR$ is uniformly discrete and not porous, which in particular provides an affirmative answer to \cite[Question 6]{Aliaga26}. We refer to the very recent preprint \cite{Mason} for more results on Lipschitz-free spaces over products of the form $E_1\times\ldots\times E_n$ for certain classes of (possibly different) sets $E_i\subset\RR$.

\section{Notation and preliminaries}

In the sequel, $M$ is assumed to stand for a complete metric space with metric $d$, and the closed ball with center $x\in M$ and radius $r>0$ is denoted $B_M(x,r)$. We say that $M$ is \emph{uniformly discrete} if there is $\delta>0$ such that $d(x,y)\geq\delta$ for all $x\neq y\in M$. A subset $N\subset M$ is \emph{$\ep$-dense} in $M$ if for all $x\in M$ there is $y\in N$ such that $d(x,y)\leq\ep$. If $N$ is uniformly discrete and $\ep$-dense in $M$ for some $\ep>0$, then it is a \emph{net} in $M$. By an application of Zorn's lemma, every uniformly discrete subset of $M$ is a subset of a net in $M$.

We consider only real scalars in this note. We use the following Banach space theoretic notation: given Banach spaces $X,Y$, we write $X\equiv Y$ if they are linearly isometric, $X\sim Y$ if they are linearly isomorphic, and $X\compl Y$ if $Y$ contains a complemented subspace isomorphic to $X$. The $\ell_1$-sum of a family $\set{X_i:i\in I}$ of Banach spaces is denoted by $\pare{\bigoplus_{i\in I}X_i}_{\ell_1}$.

We recall that the isometric class of a Lipschitz-free space is invariant under changes of base point. We also have $\lipfree{M_1}\equiv\lipfree{M_2}$ if $M_1$ and $M_2$ are related by a surjective \emph{dilation}, i.e. a mapping $\varphi:M_1\to M_2$ such that $d(\varphi(x),\varphi(y))=C d(x,y)$ for some fixed constant $C$ and all $x,y\in M_1$. More generally, $\lipfree{M_1}$ and $\lipfree{M_2}$ are $D$-isomorphic (for $D\geq 1$) if $M_1$ and $M_2$ are \emph{bi-Lipschitz equivalent with distortion $D$}, i.e. there is a surjective mapping $\varphi:M_1\to M_2$ such that
$$
C\cdot d(x,y) \leq d(\varphi(x),\varphi(y)) \leq CD\cdot d(x,y)
$$
for some fixed constant $C$ and all $x,y\in M_1$.

Our arguments will use several auxiliary results that we collect here without proof. The first is Kalton's decomposition theorem, originally from \cite{Kalton}. For the proof of the version used here, see e.g. \cite[Lemma 1.2]{AliagaMedina}.

\begin{proposition}\label{pr:kalton}
For any metric space $M$ with base point $0\in M$,
$$
\lipfree{M}\compl\pare{\bigoplus_{n=1}^\infty \lipfree{B_M(0,2^n)}}_{\ell_1} .
$$
\end{proposition}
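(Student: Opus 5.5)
The plan is to realize $\lipfree{M}$ as a complemented subspace of the $\ell_1$-sum via a bounded linear map $T$ and a norm-one map $S$ with $S\circ T=\mathrm{id}$. $T$ will come from a Lipschitz partition of unity adapted to dyadic annuli around $0$; $S$ will simply sum up the canonical inclusions. Up to a harmless reindexing (the ball $B_M(0,2^{k+1})$ for $k\geq 1$), I write $B_k=B_M(0,2^{k+1})$.

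\textbf{Partition of unity and the maps.} Define $\lambda_k:[0,\infty)\to[0,1]$ as follows.
\begin{itemize}
\item $\lambda_1=1$ on $[0,2]$, decreasing linearly to $0$ at $4$, and $0$ afterwards.
\item For $k\geq 2$, $\lambda_k$ vanishes outside $(2^{k-1},2^{k+1})$, increases linearly on $[2^{k-1},2^k]$ from $0$ to $1$, and decreases linearly on $[2^k,2^{k+1}]$ from $1$ to $0$.
\end{itemize}
Then $\sum_k\lambda_k\equiv 1$, each $\lambda_k$ is $2^{1-k}$-Lipschitz, and at most two of them are nonzero at any $t$. Put $\rho_k(x)=\lambda_k(d(x,0))$, so $\operatorname{supp}\rho_k\subset B_k$.

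The map $S:\pare{\bigoplus_k\lipfree{B_k}}_{\ell_1}\to\lipfree{M}$, $S((\mu_k)_k)=\sum_k\mu_k$ (each $\lipfree{B_k}$ sits isometrically in $\lipfree{M}$), has norm at most $1$. For $T$, I would rather define its would-be adjoint
$$
R:\pare{\textstyle\bigoplus_k \Lip_0(B_k)}_{\ell_\infty}\to\Lip_0(M),\qquad R((f_k)_k)(x)=\sum_k\rho_k(x)\tilde f_k(x),
$$
where $\tilde f_k$ is a McShane extension of $f_k$ to $M$ with the same Lipschitz constant. The value of $R$ does not depend on the choice of extension, because $\rho_k$ vanishes off $B_k$.

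\textbf{Boundedness of $R$ (the main computation).} Take $\|f_k\|\leq 1$ and points $x,y$ with, say, $d(y,0)\leq d(x,0)$. I would split
$$
|Rf(x)-Rf(y)|\leq \sum_k\rho_k(x)|\tilde f_k(x)-\tilde f_k(y)|+\sum_k|\rho_k(x)-\rho_k(y)|\,|\tilde f_k(y)|.
$$
The first sum is at most $d(x,y)$. In the second sum, I use $|\tilde f_k(y)|\leq d(y,0)$ and $|\rho_k(x)-\rho_k(y)|\leq 2^{1-k}d(x,y)$.
\begin{itemize}
\item If $d(x,y)\leq d(x,0)/2$, only the boundedly many $k$ with $2^k$ comparable to $d(x,0)$ contribute. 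For these, $2^{1-k}d(y,0)\lesssim 1$, which gives a universal constant.
\item If $d(x,y)> d(x,0)/2$, I bound crudely: $|Rf(x)|\leq d(x,0)$ and $|Rf(y)|\leq d(y,0)\leq d(x,0)<2d(x,y)$.
\end{itemize}
This case analysis is the step I expect to need the most care, but it is elementary. So $\|R\|\leq C$ for an absolute constant $C$.

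\textbf{Pre-adjoint and conclusion.} $R$ is weak$^*$-to-weak$^*$ continuous on bounded sets. Indeed, a bounded net converges weak$^*$ in the $\ell_\infty$-sum iff each coordinate converges pointwise, and then $Rf$ converges pointwise, since each $x$ involves only two indices. By the Banach--Dieudonn\'e/Krein--Smulian argument, $R=T^*$ for a bounded $T$. Explicitly,
$$
T\delta(x)=(\rho_k(x)\delta(x))_k ,
$$
with $\rho_k(x)\delta(x)\in\lipfree{B_k}$, which is $0$ when $x\notin B_k$. Finally,
$$
ST\delta(x)=\sum_k\rho_k(x)\delta(x)=\delta(x),
$$
so $ST=\mathrm{id}$ on the dense span and hence everywhere. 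Thus $TS$ is a bounded projection onto $T(\lipfree{M})\sim\lipfree{M}$, which proves the claim.
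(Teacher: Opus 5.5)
Your proof is correct. It is essentially the standard proof of Kalton's decomposition, which the paper does not reprove but cites via \cite[Lemma 1.2]{AliagaMedina}: one embeds $\lipfree{M}$ through $\delta(x)\mapsto(\rho_k(x)\delta(x))_k$ for a dyadic Lipschitz partition of unity, and the summation map serves as a left inverse. You verify boundedness on the dual side through $R=T^*$ rather than on molecules, which is equivalent. Your case split is not even needed: since $d(y,0)\leq d(x,0)$, each of the at most four indices $k$ contributing to the second sum already has $d(y,0)<2^{k+1}$, so each such term is at most $4d(x,y)$.
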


The next one establishes the existence of complementation relations for Lipschitz-free spaces over subsets of $\RR^n$ (or, more generally, of doubling metric spaces). For reference, see e.g. \cite[Fact 2.4]{AliagaMedina} and the discussion following its proof.

\begin{proposition}\label{pr:extension}
If $N\subset M\subset\RR^n$ then $\lipfree{N}\compl\lipfree{M}$.
\end{proposition}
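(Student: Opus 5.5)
The statement immediately above is Proposition~\ref{pr:extension}, which the paper collects without proof. Still, here is how I would prove it. The plan is to build a bounded linear extension operator at the level of Lipschitz functions that is weak$^*$-continuous, and then pass to preduals. The complemented copy of $\lipfree{N}$ inside $\lipfree{M}$ is the canonical one: the linearization of the inclusion $N\hookrightarrow M$. This map $\iota:\lipfree{N}\to\lipfree{M}$ is an isometric embedding (McShane extension). It then suffices to find a bounded projection $P:\lipfree{M}\to\lipfree{N}$ with $P\circ\iota=\mathrm{id}$. Equivalently, we need a weak$^*$-continuous bounded linear extension operator $E:\Lip_0(N)\to\Lip_0(M)$, and we take $P=E_*$. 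We may assume $N$ is closed, since $\lipfree{N}=\lipfree{\cl{N}}$, and that the base point lies in $N$, since changing the base point is harmless.

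\textbf{Step 1: a linear extension.} Use a Whitney-type decomposition of $\RR^n\setminus N$ into dyadic cubes $Q$ whose diameters are comparable to their distance to $N$. Take a subordinate Lipschitz partition of unity $\{\phi_Q\}$ with $\abs{\nabla\phi_Q}\lesssim 1/\diam Q$ and bounded overlap; the overlap bound depends only on $n$ (doubling). For each $Q$ pick a point $p_Q\in N$ with $d(p_Q,Q)\lesssim \diam Q$. Define $Ef=f$ on $N$ and $Ef=\sum_Q \phi_Q f(p_Q)$ off $N$, then restrict to $M$. This is the standard Whitney/Lee--Naor linear extension, and the usual estimates give $\|Ef\|_{\Lip}\leq C(n)\|f\|_{\Lip}$. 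Because every $\phi_Q$ vanishes near the base point's cube structure except finitely many terms, $Ef(0)=0$ holds.

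\textbf{Step 2: weak$^*$ continuity.} For each $x\in M$, the value $Ef(x)$ is a finite linear combination of point evaluations $f(p_Q)$. Hence $E^*\delta(x)\in\lspan\delta(N)\subset\lipfree{N}$. Since $E$ is bounded, the pointwise-on-bounded-sets criterion (weak$^*$ topology on bounded sets of $\Lip_0$ is pointwise convergence) shows $E$ is weak$^*$-to-weak$^*$ continuous. So $E=P^*$ with $P:\lipfree{M}\to\lipfree{N}$ given by $P\delta(x)=\sum_Q\phi_Q(x)\delta(p_Q)$, and $\|P\|\leq C(n)$.

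\textbf{Step 3: conclude.} For $x\in N$ we have $P\delta(x)=\delta(x)$, so $P\iota=\mathrm{id}$ and $\iota P$ is the desired projection.

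I expect the main obstacle to be the Lipschitz estimate in Step 1 for pairs with one point in $N$ and one far away, or both points off $N$ in distant cubes. Here I would use the standard comparison $\abs{f(p_Q)-f(p_{Q'})}\lesssim d(x,y)+d(x,N)$ for neighbouring cubes, together with $\sum_Q\nabla\phi_Q=0$.
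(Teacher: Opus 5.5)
Your proof is correct and follows the standard argument behind the fact the paper cites without proof. The projection is the preadjoint of a bounded weak$^*$-continuous linear extension operator $\Lip_0(N)\to\Lip_0(M)$; in $\RR^n$ the classical Whitney extension serves (general doubling spaces use a Lee--Naor type extension), and your observation that $Ef(x)$ is a finite combination of the values $f(p_Q)$ gives weak$^*$-continuity at once.

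The only blemish is the garbled justification of $Ef(0)=0$, which holds trivially because $0\in N$ and $Ef=f$ on $N$.
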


For the next results, we recall the following notion from \cite{Aliaga26}. A family $\mathcal{S}$ of subsets of $M$ is \emph{well-separated} if there exist a point $x_0\in M$ and a constant $C<\infty$ such that
$$
d(x,x_0) + d(y,x_0) \leq C\cdot d(x,y)
$$
for $x,y$ belonging to different elements of $\mathcal{S}$. Well-separated families make it possible to decompose the Lipschitz-free space over a union as a sum of separate Lipschitz-free spaces; see \cite[Lemma 3.3]{Aliaga26} for a more precise statement.

\begin{proposition}\label{pr:separation}
If $\mathcal{S}$ is an infinite family of well-separated non-empty subsets of $M$, then
$$
\pare{\bigoplus_{A\in\mathcal{S}} \lipfree{A}}_{\ell_1} \compl \mathcal{F}\pare{\bigcup\mathcal{S}} .
$$
\end{proposition}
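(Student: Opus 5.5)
The plan is to exhibit the $\ell_1$-sum as a complemented subspace by building two bounded operators $T:\pare{\bigoplus_{A\in\mathcal{S}}\lipfree{A}}_{\ell_1}\to\lipfree{U}$ and $P:\lipfree{U}\to\pare{\bigoplus_{A\in\mathcal{S}}\lipfree{A}}_{\ell_1}$, with $U=\bigcup\mathcal{S}$ and $P\circ T=\mathrm{id}$. Then $T$ is an isomorphism onto its range and $TP$ is a projection onto that range. Since the isometric class of $\lipfree{A}$ does not depend on the base point and $\lipfree{A}\equiv\lipfree{\cl{A}}$, I may choose base points conveniently.

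\textbf{Choice of base points.} Let $x_0,C$ witness well-separation. For each $A\in\mathcal{S}$ I choose $a_A\in\cl{A}$ as follows. If $d(x_0,A)>0$, take $a_A\in A$ with $d(a_A,x_0)\leq 2d(x_0,A)$. If $x_0\in\cl{A}$, take $a_A=x_0$. In both cases
$$
d(x,a_A)\leq d(x,x_0)+d(a_A,x_0)\leq 3\,d(x,x_0)\qquad\text{for all } x\in A .
$$
I regard $\lipfree{A}$ (or $\lipfree{\cl A}$) with base point $a_A$. Its elements are then limits of combinations of $\delta(x)-\delta(a_A)$, and these combinations are also naturally elements of $\lipfree{\cl U}\equiv\lipfree{U}$.

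\textbf{The maps.} Define $T\big((\mu_A)_A\big)=\sum_A\mu_A$, where each $\mu_A$ is viewed inside $\lipfree{\cl U}$. Since each inclusion $\lipfree{\cl A}\to\lipfree{\cl U}$ has norm $1$, we get $\|T\|\leq 1$.

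I obtain $P$ as a pre-adjoint of a gluing operator $E:\pare{\bigoplus_A\Lip_0(A)}_{\ell_\infty}\to\Lip_0(U)$. Given $(f_A)$ with $f_A(a_A)=0$ and $\Lip(f_A)\leq 1$, define $f=f_A$ on each $A$ (extended to $\cl A$ if needed). For $x,y$ in the same $A$ we have $|f(x)-f(y)|\leq d(x,y)$. For $x\in A$ and $y\in B$ with $A\neq B$,
$$
|f(x)-f(y)|\leq |f_A(x)|+|f_B(y)|\leq d(x,a_A)+d(y,a_B)\leq 3\big(d(x,x_0)+d(y,x_0)\big)\leq 3C\,d(x,y).
$$
Thus $\Lip(f)\leq 3C$, after also fixing the value at the base point of $U$ (a constant shift, which is harmless). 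The operator $E$ is linear and weak$^*$-to-weak$^*$ continuous on bounded sets, since on a point-evaluation it acts coordinatewise. Hence $E=P^*$ for $P$ given on generators by
$$
P\big(\delta(x)-\delta(y)\big)=\big(\delta(x)-\delta(a_A)\big)e_A-\big(\delta(y)-\delta(a_B)\big)e_B\qquad(x\in A,\ y\in B),
$$
with $\|P\|\leq 3C$. For the well-definedness I can either check directly that $P$ is bounded on finitely supported molecules by the duality estimate above, or define $P$ by this formula on the span of point-evaluations and extend.

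\textbf{Conclusion and main obstacle.} Checking $PT=\mathrm{id}$ on generators $\delta(x)-\delta(a_A)$ in the $A$-th coordinate is immediate. The $\ell_1$ structure also comes for free: a functional on the $\ell_1$-sum is just an $\ell_\infty$-bounded family $(f_A)$, and the estimate for $E$ is uniform over the family. The only step I expect to need care is the choice of base points $a_A$ satisfying the linear bound $d(x,a_A)\lesssim d(x,x_0)$. This fails naively when $x_0$ is a limit point of several sets, which is why I pass to closures and take $a_A=x_0$ in that case. Infiniteness of $\mathcal{S}$ plays no role beyond making the sum genuinely an infinite $\ell_1$-sum.
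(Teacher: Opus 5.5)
Your proof is correct. The paper does not prove this statement itself: it is one of the auxiliary facts quoted without proof, with a pointer to \cite[Lemma 3.3]{Aliaga26}. So your argument is a self-contained replacement rather than a variant of something in the text.

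Compared with invoking the cited decomposition, your route has two advantages:
\begin{enumerate}
\item It gives explicit constants: $\|T\|\leq 1$ and $\|P\|\leq 3C$, so the copy is $3C$-complemented.
\item It never uses that $\mathcal{S}$ is infinite.
\end{enumerate}
The choice of base points $a_A\in\cl{A}$ with $d(x,a_A)\leq 3\,d(x,x_0)$ deals directly with wherever $x_0$ sits relative to each $A$, including the case $x_0\in\cl{A}\setminus A$. This keeps the gluing estimate uniform over the family.

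One step deserves an explicit line. The gluing $f=f_A$ on $\cl{A}$, and your formula for $P$, are well defined only because distinct members of $\mathcal{S}$ overlap in a controlled way.
\begin{itemize}
\item By continuity, the separation inequality still holds for $x\in\cl{A}$ and $y\in\cl{B}$ with $A\neq B$.
\item Taking $x=y$ gives $2d(x,x_0)\leq 0$, so $\cl{A}\cap\cl{B}\subset\set{x_0}$.
\item If $x_0\in\cl{A}\cap\cl{B}$, your choice forces $a_A=a_B=x_0$. Hence $f_A(x_0)=f_B(x_0)=0$ and $(\delta(x_0)-\delta(a_A))e_A=0=(\delta(x_0)-\delta(a_B))e_B$, so both definitions agree at the shared point.
\end{itemize}
With that observation added, the rest goes through as you wrote it: the estimate $\Lip(f)\leq 3C$, the weak$^*$ continuity of $E$ (pointwise convergence on bounded sets), and $PT=\mathrm{id}$ on the generators $\delta(x)-\delta(a_A)$.
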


Finally, we recall the definition of porosity. Let $X$ be a metric space. A subset $M\subset X$ is \emph{porous (in $X$)} if there exists $\lambda\in (0,1)$ such that every ball $B_X(x,r)$ (with $r\leq\diam(X)$) contains a ball $B_X(y,\lambda r)$ of comparable size that does not intersect $M$. It is clear, by taking $r\to\infty$, that any net in a Banach space is non-porous. Then we have the following result, proved in \cite[Proposition 2.6]{Aliaga26}; part (a) is straightforward, its real content is part (b).

\begin{proposition}\label{pr:porous}
Let $X$ be a complete geodesic metric space (e.g. a Banach space). Suppose that $M$ is a non-porous subset of $X$. Then there exists a sequence of balls $B_n=B_X(p_n,r_n)$ in $X$ such that
\begin{enumerate}[label={\upshape{(\alph*)}}]
\item $M\cap B_n$ is $\ep_n r_n$-dense in $B_n$ for some sequence $\ep_n\searrow 0$ of positive numbers, and
\item $B_n$ are disjoint and well-separated.
\end{enumerate}
\end{proposition}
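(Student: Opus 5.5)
Since (a) is a routine rescaling of the definition, most of the work goes into (b). The plan is to first produce, for every $\lambda\in(0,1)$, a ball on which $M$ is $4\lambda r$-dense. Next, shrink and relocate these balls so that their radii are negligible compared to their distances from a fixed base point $x_0$. Finally, pass to a subsequence along which these distances lie on well-separated dyadic scales. One case of the last step is not settled below.

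\emph{Step 1 (part (a)).} Non-porosity means that for each $\lambda\in(0,1)$ there is a ball $B_X(x,r)$ such that every ball $B_X(y,\lambda r)\subset B_X(x,r)$ meets $M$. Let $z\in B_X(x,r/2)$. Using a geodesic from $x$ to $z$, pick $z'$ with $d(x,z')\le r/2-\lambda r$ and $d(z,z')\le\lambda r$. Then $B_X(z',\lambda r)\subset B_X(x,r/2)$ meets $M$, so $M\cap B_X(x,r/2)$ is $2\lambda r=4\lambda(r/2)$-dense in $B_X(x,r/2)$. Taking $\lambda_k\searrow0$ gives balls $B_X(p_k,r_k)$ with $M\cap B_X(p_k,r_k)$ being $\ep_k r_k$-dense, $\ep_k\to0$. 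Two further facts, both using only the triangle inequality and geodesics, will be used:
\begin{itemize}
\item[(i)] density passes to sub-balls: if $B_X(q,s)\subset B_X(p,\tfrac12 r)$ and $M\cap B_X(p,r)$ is $\ep r$-dense in $B_X(p,r)$, then $M\cap B_X(q,s)$ is $(2\ep r)$-dense in $B_X(q,s)$, after the same geodesic adjustment, as long as $s\ge 2\ep r$;
\item[(ii)] in particular, with $s=\sqrt{\ep}\,r$ the density ratio is $2\sqrt\ep\to0$.
\end{itemize}

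\emph{Step 2 (placing sub-balls far from $x_0$).} Fix $x_0\in X$. Inside $B_X(p_k,r_k/2)$ choose $q_k$ with $d(q_k,x_0)\ge r_k/4$; this is possible by moving along a geodesic through $p_k$ away from $x_0$. Set $s_k=\sqrt{\ep_k}\,r_k$ and $B_k'=B_X(q_k,s_k)$. By (i)–(ii), part (a) holds for $B_k'$, and $a_k:=d(q_k,x_0)$ satisfies $s_k/a_k\le4\sqrt{\ep_k}\to0$. Now suppose we can pass to a subsequence with $a_{k+1}\ge10a_k$ for all $k$, or $a_{k+1}\le a_k/10$ for all $k$. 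Take $x\in B_j'$ and $y\in B_k'$ with, say, $a_k\le a_j/10$. Then
$$d(x,y)\ge a_j-a_k-s_j-s_k\ge \tfrac12 a_j\ge c\,(d(x,x_0)+d(y,x_0)),$$
because $s_j,s_k$ are small multiples of $a_j$ and $a_k$. This gives both disjointness and well-separation. Such a subsequence exists whenever $\sup_k a_k=\infty$ or $\inf_k a_k=0$.

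\emph{Step 3 (main obstacle).} The remaining case is $0<\alpha\le a_k\le\beta<\infty$ for all $k$. Then $r_k\le4\beta$, and the balls sit at a bounded, non-degenerate distance from $x_0$. If $r_k\to0$ along a subsequence, I would change the base point to $x_0'=q_{k_1}$ and redo Step 2 with the tiny balls. In that situation the distances to $x_0'$ can be made to tend to $0$ or stay bounded below, and iterating the choice of base point should restore the geometric-scale structure. The genuinely hard subcase is $r_k$ bounded below, $r_k\ge c>0$, since there is no compactness to pull the centres together. Here I would try to exploit freedom inside a \emph{single} ball $B_X(p_k,r_k)$ with $k$ large. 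Points of $B_X(p_k,r_k/2)$ realize every distance in an interval of length at least $r_k/2$ from any fixed point of that ball, and by (i) sub-balls of radius $\sqrt{\ep_k}r_k$ anywhere in it retain density $2\sqrt{\ep_k}$. So, choosing $x_0$ inside $B_X(p_k,r_k/4)$, I would aim to place further sub-balls at distances $r_k2^{-m}$ from it. The difficulty is that the density bound $2\sqrt{\ep_k}$ of such sub-balls does not improve with $m$. I expect the resolution to require a diagonal argument that uses balls from different indices $k$ at geometrically decreasing distances from a suitably chosen $x_0$, and this is the step I anticipate needing the most care.
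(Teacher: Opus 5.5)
There is a genuine gap: the bounded case $\alpha\le a_k\le\beta$ of Step 3 is left open, and the strategies you sketch there would not close it. Your Steps 1 and 2 are fine (part (a) via the geodesic adjustment, the sub-ball estimate, and the cases $\sup_k a_k=\infty$ or $\inf_k a_k=0$). In Step 3 you treat the lack of compactness, and radii $r_k$ bounded below, as the obstruction, but both are harmless. In this case $r_k\le 4a_k\le 4\beta$, so your shrunken radii $s_k=\sqrt{\ep_k}\,r_k$ already tend to $0$, and the only issue is the position of the centres $q_k$. Your implicit assumption that well-separation can only come from geometrically growing or shrinking distances to the base point is too restrictive. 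Re-choosing the base point as $q_{k_1}$ and ``iterating'' does not yield a single base point, and need not help anyway. For example, in $\ell_2$ with centres $q_k=e_k$, every base point sees the centres at distances converging to a positive limit. Yet that configuration is already well-separated with respect to $0$, precisely because the centres are uniformly separated.

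The missing idea is a dichotomy for $(q_k)$, and this is where the completeness of $X$, which your proposal never uses, enters.

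\emph{Separated case.} Suppose some subsequence of $(q_k)$ is $\delta$-separated for some $\delta>0$. Discard finitely many terms so that $s_k\le\delta/4$. For $x\in B_X(q_j,s_j)$ and $y\in B_X(q_k,s_k)$ with $j\neq k$ you get $d(x,y)\ge\delta/2$ and $d(x,x_0)+d(y,x_0)\le 2\beta+\delta/2$. So these balls are disjoint and well-separated with $C=(4\beta+\delta)/\delta$.

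\emph{Cauchy case.} Otherwise, a greedy/pigeonhole/diagonal argument gives a Cauchy subsequence, converging to some $z\in X$; take $z$ as the new base point. Note that $d(p_k,z)\le r_k/2+d(q_k,z)$.
\begin{itemize}
\item If $r_k\to 0$ along a subsequence, then $p_k\to z$. Rerunning Step 2 with base point $z$ gives new centres at distances $a_k'\in[r_k/4,\,d(p_k,z)+r_k/2]$. Hence $a_k'\to 0$, and you can extract $a'_{k+1}\le a'_k/10$.
\item If instead $r_k\ge c>0$, the same inequality gives $B_X(z,c/4)\subset B_X(p_k,r_k)$ for large $k$. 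Since $\ep_k r_k\to 0$, this forces $B_X(z,c/4)\subset\cl{M}$. Now take balls $B_X(y_m,t_m/10)$ with $d(y_m,z)=t_m=\frac{c}{8}10^{-m}$; these exist by geodesicity. By your geodesic adjustment, $M$ meets each of them in a dense subset, and they are well-separated with respect to $z$.
\end{itemize}

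The paper does not prove this proposition itself but cites it from earlier work, so the above is a way to complete your argument rather than a reconstruction of the original proof.
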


\section{Proof of the main result}

We start by stating and proving a general form of our main argument, modeled after (and improving on) \cite[Proposition 3.5]{Aliaga26}.

\begin{proposition}\label{pr:compl porous}
Let $X$ be a Banach space, $D$ a uniformly discrete subset of $X$, and $M$ a non-porous subset of $X$. Then there exists a subset $N\subset M$ such that $\lipfree{D}\compl\lipfree{N}$.
\end{proposition}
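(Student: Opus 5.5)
The plan is to use Kalton's decomposition (Proposition~\ref{pr:kalton}) to break $\lipfree{D}$ into an $\ell_1$-sum of free spaces over bounded pieces. We then realize each piece, up to uniformly bounded distortion, inside $M$ intersected with one of the balls supplied by Proposition~\ref{pr:porous}. Finally, Proposition~\ref{pr:separation} reassembles the pieces.

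Concretely, fix a base point $0\in D$ and let $\delta>0$ be its separation constant. By Proposition~\ref{pr:kalton}, $\lipfree{D}\compl\pare{\bigoplus_{k}\lipfree{D_k}}_{\ell_1}$ with $D_k=B_D(0,2^k)$. Each $D_k$ is bounded, and all its distinct points are at distance at least $\delta$ from each other. Apply Proposition~\ref{pr:porous} to $M$ to obtain disjoint, well-separated balls $B_j=B_X(p_j,r_j)$ with $M\cap B_j$ being $\ep_jr_j$-dense in $B_j$ and $\ep_j\to 0$.

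For each $k$, choose an index $j_k$ (strictly increasing in $k$) so large that
$$
\ep_{j_k}\leq \frac{\delta}{8\cdot 2^{k}}.
$$
Consider the dilation $\varphi_k(x)=p_{j_k}+\frac{r_{j_k}}{2^{k}}x$, which maps $D_k$ into $B_{j_k}$ and multiplies distances by $c_k=r_{j_k}/2^k$. For each $x\in D_k$ pick $\psi_k(x)\in M\cap B_{j_k}$ with $\|\psi_k(x)-\varphi_k(x)\|\leq\ep_{j_k}r_{j_k}\leq c_k\delta/8$. For $x\neq y$ in $D_k$ we have $c_k\|x-y\|\geq c_k\delta$, so the perturbation is at most a quarter of the distance. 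Hence
$$
\tfrac34 c_k\|x-y\|\leq\|\psi_k(x)-\psi_k(y)\|\leq\tfrac54 c_k\|x-y\|.
$$
Thus $N_k=\psi_k(D_k)$ is bi-Lipschitz equivalent to $D_k$ with distortion at most $5/3$, uniformly in $k$. Consequently $\lipfree{D_k}$ is $5/3$-isomorphic to $\lipfree{N_k}$, and the $\ell_1$-sums are isomorphic.

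Set $N=\bigcup_k N_k\subset M$. Since $N_k\subset B_{j_k}$ and the balls $B_j$ are well-separated, the family $\set{N_k}$ is a well-separated family of non-empty sets, with the same $x_0$ and $C$, and it is infinite. Proposition~\ref{pr:separation} gives $\pare{\bigoplus_k\lipfree{N_k}}_{\ell_1}\compl\lipfree{N}$. Chaining the complementations yields $\lipfree{D}\compl\lipfree{N}$.

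The only delicate point is the uniformity of the distortion. This is exactly why we rescale $D_k$ to radius $r_{j_k}$ and require $\ep_{j_k}$ to beat $\delta/2^k$. Uniform discreteness of $D$ is what makes the perturbation small relative to the distances. If $D$ happens to be finite, Kalton's sum has only finitely many distinct pieces. In that case one can either use a single ball, or note that the sum still has infinitely many terms, with the sets $D_k$ eventually constant, so the argument above applies verbatim.
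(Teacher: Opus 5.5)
Your proposal is correct and follows the paper's proof essentially step for step. Both arguments apply Kalton's decomposition to $D$ and take a subsequence of the balls from Proposition~\ref{pr:porous} with $\ep_{j_k}\lesssim 2^{-k}\delta$. Each $D_k$ is then rescaled into its ball and perturbed into $M$ using the uniform discreteness of $D$, and the pieces are reassembled via Proposition~\ref{pr:separation}. The only differences are cosmetic: you compose the dilation and the perturbation into a single map $\psi_k$, where the paper perturbs inside $B_X(0,2^n)$ and then pulls back by $\varphi_n^{-1}$. Your slightly tighter constant gives distortion $5/3$ instead of $3$.
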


\begin{proof}
Let $R=\inf\set{d(x,y):x\neq y\in D}>0$. Assume without loss of generality that the origin $0$ belongs to $D$ and that we take it as the base point. Denote $D_n = D\cap B_X(0,2^n)$.

By Proposition \ref{pr:porous}, there exists a sequence of disjoint, well-separated balls $B_n=B_X(p_n,r_n)$ and a sequence $\ep_n\searrow 0$ such that $M\cap B_n$ is $\ep_nr_n$-dense in $B_n$. By passing to a subsequence, we may assume that $\ep_n\leq 2^{-(n+2)}R$.

Let $\varphi_n$ be the affine function mapping $B_n$ onto $B_X(0,2^n)$, i.e. $\varphi_n:x\mapsto 2^nr_n^{-1}(x-p_n)$, which is a dilation. Then $\varphi_n(M\cap B_n)$ is $\ep_n2^n$-dense, that is, $\frac{R}{4}$-dense in $B_X(0,2^n)$. Fix a mapping
$$
\psi_n:D_n\to\varphi_n(M\cap B_n)
$$
with $d(x,\psi_n(x))\leq\frac{R}{4}$ for all $x\in D_n$. Then, for $x\neq y\in D_n$ we have
$$
\frac{1}{2}d(x,y) \leq d(x,y)-\frac{R}{2} \leq d(\psi_n(x),\psi_n(y)) \leq d(x,y)+\frac{R}{2} \leq \frac{3}{2}d(x,y) .
$$
Thus $\psi_n$ is a bi-Lipschitz map from $D_n$ onto $\psi_n(D_n)$ with distortion (at most) $3$.

Let $N=\bigcup_{n=1}^\infty\varphi_n^{-1}(\psi_n(D_n))\subset M$. Then we have
$$
\lipfree{D} \compl \pare{\bigoplus_{n=1}^\infty\lipfree{D_n}}_{\ell_1} \sim \pare{\bigoplus_{n=1}^\infty\lipfree{\psi_n(D_n)}}_{\ell_1} \equiv \pare{\bigoplus_{n=1}^\infty\lipfree{\varphi_n^{-1}(\psi_n(D_n))}}_{\ell_1} \compl \lipfree{N} .
$$
Indeed: the first relation follows from Proposition \ref{pr:kalton}; the second from the fact that $\lipfree{D_n}\sim\lipfree{\psi_n(D_n)}$ with a uniform isomorphism constant for all $n$; the third from the fact that each $\varphi_n$ is a dilation; and the last one from Proposition \ref{pr:separation} as the sets $B_n$, hence also $\varphi_n^{-1}(\psi_n(D_n))\subset B_n$, are well-separated.
\end{proof}

The missing ingredient in the argument above is Proposition \ref{pr:extension}, that is only valid for $X=\RR^n$. With it, we can complete the proof of Theorem \ref{th:porous_lipfree} as follows.

\begin{proof}[Proof of Theorem \ref{th:porous_lipfree}]
Applying Proposition \ref{pr:compl porous} to $M$ with $X=\RR^n$ and $D=\ZZ^n$ yields a subset $N\subset M$ such that $\lipfree{\ZZ^n}\compl\lipfree{N}$. On the other hand, $\lipfree{N}\compl\lipfree{M}$ by Proposition \ref{pr:extension}. This establishes the first assertion.

For the second one, note that $\ZZ^n$ is a non-porous subset of $\RR^n$, thus Proposition \ref{pr:compl porous} yields $\lipfree{M}\compl\lipfree{N}$ for some $N\subset\ZZ^n$, and hence $\lipfree{M}\compl\lipfree{\ZZ^n}$ by Proposition \ref{pr:extension}. The statement now follows by Pe\l czy\'nski's decomposition method, as $\lipfree{\ZZ^n}$ is isomorphic to its $\ell_1$-sum by e.g. \cite[Theorem 3.6]{AliagaMedina}.
\end{proof}

We remark that our argument can be generalized to the situation where the ambient space is a general Carnot group in place of $\RR^n$. In that case, our main result reads as follows.

\begin{theorem}
Let $G$ be a Carnot group equipped with its Carnot-Carath\'eodory metric and let $N$ be a net in $G$. If $M\subset G$ is not porous then $\lipfree{N}\compl\lipfree{M}$. If $M$ is moreover uniformly discrete then $\lipfree{M}\sim\lipfree{N}$.
\end{theorem}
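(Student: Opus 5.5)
We will follow the Euclidean proof of Theorem \ref{th:porous_lipfree} and replace each Euclidean ingredient by its Carnot-group analogue. The plan has two parts: a Carnot version of Proposition \ref{pr:compl porous} with $X=G$ and $D=N$, and a Carnot version of Proposition \ref{pr:extension}.

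\textbf{Step 1: Proposition \ref{pr:compl porous} in $G$.} That proof used only three things, and each has a counterpart in $G$.
\begin{enumerate}[label={\upshape{(\roman*)}}]
\item \emph{Proposition \ref{pr:porous}.} It is stated for complete geodesic metric spaces. A Carnot group with its Carnot--Carath\'eodory metric is complete and geodesic, so it applies verbatim. It produces well-separated disjoint balls $B_n=B_G(p_n,r_n)$ in which $M\cap B_n$ is $\ep_nr_n$-dense.
\item \emph{The normalizing maps $\varphi_n$.} In $\RR^n$ these were affine dilations. In $G$ we use $\varphi_n=\delta_{2^n/r_n}\circ\tau_{p_n^{-1}}$, where $\tau$ is left translation and $\delta_t$ is the intrinsic dilation. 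Left translations are isometries of the Carnot--Carath\'eodory metric and $d(\delta_tx,\delta_ty)=t\,d(x,y)$. Hence $\varphi_n$ is a surjective dilation of $B_n$ onto $B_G(e,2^n)$, where $e$ is the identity of $G$, and it maps $\ep_nr_n$-dense sets to $2^n\ep_n$-dense sets.
\item \emph{The perturbation maps $\psi_n$ and the Kalton step.} The bound showing that $\psi_n$ has distortion at most $3$ is pure metric reasoning and carries over unchanged. Proposition \ref{pr:kalton} and Proposition \ref{pr:separation} are stated for general metric spaces. We take the identity $e$ as base point and assume without loss of generality that $e\in N$; this is harmless by base-point invariance and by translating the net.
\end{enumerate}
Together these give a subset $N'\subset M$ with $\lipfree{N}\compl\lipfree{N'}$.

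\textbf{Step 2: Proposition \ref{pr:extension} in $G$.} This is the main obstacle, since it was stated only for $\RR^n$. The paper's reference notes that it holds for doubling metric spaces, and Carnot groups are doubling because Haar measure is Ahlfors $Q$-regular, $Q$ being the homogeneous dimension. We plan to argue that a doubling metric space admits bounded linear extension operators $\Lip_0(N')\to\Lip_0(M)$ for arbitrary subsets $N'\subset M$. The tool is a Whitney-type partition of unity subordinate to a decomposition of $M\setminus\cl{N'}$, as in the Lee--Naor / Lang--Schlichenmaier extension theorems. We will also check that these operators are weak$^*$-continuous, so that they are adjoints of bounded projections of $\lipfree{M}$ onto $\lipfree{N'}$. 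The first thing to try for this is the standard construction, where weak$^*$-continuity holds on bounded sets because the extension at each point is a finite combination of values.

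\textbf{Conclusion.} Steps 1 and 2 give $\lipfree{N}\compl\lipfree{N'}\compl\lipfree{M}$, which is the first assertion.

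For the second assertion, let $M$ be uniformly discrete. First, $M$ is contained in a net $N_1$ of $G$. Nets of $G$ are bi-Lipschitz equivalent to one another, since each is bi-Lipschitz to $G$ at large scales and to a fixed lattice-like net. Alternatively, we can avoid needing that: a net $N$ is non-porous in $G$, because porosity fails at large radii. Applying Step 1 with $D=M$ and the non-porous set $N$ gives $\lipfree{M}\compl\lipfree{N''}$ for some $N''\subset N$, and Step 2 then gives $\lipfree{M}\compl\lipfree{N}$.

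Combined with $\lipfree{N}\compl\lipfree{M}$, Pe\l czy\'nski's decomposition method yields $\lipfree{M}\sim\lipfree{N}$. This needs $\lipfree{N}\sim\pare{\bigoplus\lipfree{N}}_{\ell_1}$. We intend to obtain it as in \cite[Theorem 3.6]{AliagaMedina}. Using Proposition \ref{pr:kalton}, Proposition \ref{pr:separation} and Step 1, we embed infinitely many well-separated dilated copies of $N$-balls into $N$, which gives $\pare{\bigoplus\lipfree{N}}_{\ell_1}\compl\lipfree{N}$. The reverse relation $\lipfree{N}\compl\pare{\bigoplus\lipfree{N}}_{\ell_1}$ is immediate. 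A standard Pe\l czy\'nski argument then gives the isomorphism.
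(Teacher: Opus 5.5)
Your proposal is correct and follows the paper's route. The paper's proof consists of Proposition \ref{pr:compl porous} rerun in $G$ with the dilations $\varphi_n=\delta_{2^n/r_n}\circ\tau_{p_n^{-1}}$, Proposition \ref{pr:extension} for the doubling space $G$, and Pe\l czy\'nski's method, exactly as you plan.

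The one real difference is the self-similarity $\lipfree{N}\sim\pare{\bigoplus\lipfree{N}}_{\ell_1}$. The paper quotes it from \cite[Corollary 5.10]{AACD3}, whereas you derive it from the non-porosity of $N$. Your derivation works and makes the argument self-contained, but it is not literally an application of Step 1. You have to rerun the construction of Proposition \ref{pr:compl porous} as follows:
\begin{enumerate}[label={\upshape{(\arabic*)}}]
\item Take $D=N$ and the non-porous set $N$, enumerate $\NN\times\NN$ as $(k_m,n_m)$, and choose a subsequence of balls with $\ep_{j_m}\leq 2^{-(n_m+2)}R$.
\item Place a $3$-distorted copy of $N\cap B_G(e,2^{n_m})$ inside the $m$-th ball.
\item Proposition \ref{pr:separation} followed by Step 2 then gives $\pare{\bigoplus_m\lipfree{N\cap B_G(e,2^{n_m})}}_{\ell_1}\compl\lipfree{N}$.
\item Proposition \ref{pr:kalton}, applied in each coordinate, shows that $\pare{\bigoplus_k\lipfree{N}}_{\ell_1}$ is complemented in that sum.
\end{enumerate}
Step 2 is needed at the end of (3); your sketch omits it.

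Finally, drop the aside that nets of $G$ are bi-Lipschitz equivalent to one another. It is false already for $G=\RR^2$, by the examples of Burago--Kleiner and McMullen. Nets are quasi-isometric to one another, but that does not make them bi-Lipschitz equivalent. Your argument never uses this claim, since you immediately replace it with the correct non-porosity argument.
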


\noindent We omit the details of the proof, as it is identical to that of Theorem \ref{th:porous_lipfree} with only cosmetic changes in the definition of the dilations $\varphi_n$, in the same way as in the proof of \cite[Theorem 3.10]{Aliaga26}. Proposition \ref{pr:extension} is still valid with $G$ in place of $\RR^n$, and $\lipfree{N}$ is isomorphic to its countable $\ell_1$ sum by \cite[Corollary 5.10]{AACD3}.

\bigskip

We finish this note by noting that Theorem \ref{th:porous_lipfree} provides a solution to \cite[Question 6]{Aliaga26}, which asks whether the Lipschitz-free space over a set of the form $E\times E\subset\RR^2$, where
$$
E = \set{p(n) \,:\, n\in\NN} \subset \RR
$$
and $p$ is a polynomial, must be isomorphic to $\lipfree{\ZZ^2}$. Of course, the question is only meaningful if we assume that $p$ is non-constant. In that case, the answer is affirmative. Indeed, note that $E$ is uniformly discrete, and moreover it is not porous in $\RR$ (see Proposition \ref{pr:polynomial} below). It then follows easily from the definitions that $E\times E$ is uniformly discrete and non-porous in $\RR^2$. Thus, Theorem \ref{th:porous_lipfree} yields $\lipfree{E\times E}\sim\lipfree{\ZZ^2}$.
The same reasoning yields, more generally, that $\lipfree{E\times\stackrel{n}{\ldots}\times E}\sim\lipfree{\ZZ^n}$ whenever $E\subset\RR$ is uniformly discrete and not porous. The situation for products of different subsets of $\RR$ is more complicated, and we refer to the very recent preprint \cite{Mason} for the study of some particular cases, including an alternative solution to \cite[Question 6]{Aliaga26}.

The fact that $E$ and $E\times E$ above are not porous is well-known, and an argument is sketched in \cite[Example 4.1]{Aliaga26}, just above Question 6, for a particular case. We provide the general argument here for completeness.

\begin{proposition}\label{pr:polynomial}
Let $p$ be a non-constant polynomial with real coefficients. Then the set $E=\set{p(n):n\in\NN}$ is not porous in $\RR$.
\end{proposition}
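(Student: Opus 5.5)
We will show directly that porosity fails: for every $\lambda\in(0,1)$ we exhibit a ball $B(x,r)$ in $\RR$ such that every sub-ball $B(y,\lambda r)\subset B(x,r)$ meets $E$. We may assume that $p$ has positive leading coefficient, replacing $p$ by $-p$ if needed. This is harmless because $x\mapsto -x$ is an isometry of $\RR$ and so preserves porosity. Let $d=\deg p\geq 1$.

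The key observation is that consecutive gaps $p(n+1)-p(n)$ grow only polynomially, so they are small relative to the size of $p(n)$. If $d=1$, then $E$ is an arithmetic progression $\set{an+b}$ with $a>0$. Taking $r$ large, every interval of length $2\lambda r\geq a$ inside $[p(1),\infty)$ meets $E$, and we are done. For $d\geq 2$, the mean value theorem gives $p(n+1)-p(n)\leq \max_{[n,n+1]}p' = O(n^{d-1})$.

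The plan is to take the ball $B(x,r)$ with $x$ the midpoint of $[p(m),p(2m)]$ and $r=\frac{1}{2}(p(2m)-p(m))$, for $m$ large. Since $p(t)\sim ct^d$, we get $r\geq c' m^d$ for some $c'>0$ and all large $m$. On the other hand, the gaps between consecutive points of $E$ inside $[p(m),p(2m)]$ are at most $C(2m)^{d-1}$. This holds because $p$ is increasing for large arguments, so these gaps come exactly from consecutive integers $n\in[m,2m-1]$.

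Now take any ball $B(y,\lambda r)\subset B(x,r)=[p(m),p(2m)]$. It is an interval of length $2\lambda r\geq 2\lambda c' m^d$ contained in $[p(m),p(2m)]$. Once $2\lambda c' m^d > C 2^{d-1} m^{d-1}$, which happens for $m$ large, it must contain some $p(n)$. Hence $E$ is not porous.

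The only step needing care is making the constants uniform. We need $p$ to be eventually increasing, so that the points of $E$ in $[p(m),p(2m)]$ are exactly $p(m),\dots,p(2m)$; extra points of $E$ from small $n$ only help. We also need the lower bound $p(2m)-p(m)\geq c'm^d$, which follows from the leading term: $p(2m)-p(m)=a_d(2^d-1)m^d+O(m^{d-1})$. Neither is a real obstacle.
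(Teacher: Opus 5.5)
Your proof is correct and is essentially the paper's argument: both rest on the fact that the consecutive gaps $p(n+1)-p(n)$ are of lower order than the scale $n^{k}$ of a suitably chosen ball. The differences are only cosmetic. You use the ball $[p(m),p(2m)]$ rather than the paper's $[0,2p(n_1)]$, which spares you the separate treatment of points below $p(n_0)$, and your separate $d=1$ case is unnecessary because the mean value bound $O(n^{d-1})=O(1)$ already covers it.
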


\begin{proof}
We assume without loss of generality that the leading coefficient of $p$ is $1$, and that $p$ is strictly positive and strictly increasing in $[0,\infty)$. Let $k\geq 1$ be the degree of $p$.

Fix $\ep>0$. Since $q(n)=p(n+1)-p(n)$ is a polynomial of degree less than $k$, we may find $n_0\in\NN$ such that
$$
\frac{1}{2}n^k\leq p(n)\leq 2n^k
$$
and
$$
p(n+1)-p(n)\leq \ep n^k
$$
for all $n\geq n_0$. Choose $n_1\geq n_0$ such that $p(n_0)\leq\ep\cdot (8n_1)^k$ and consider the closed ball with center $p(n_1)$ and radius $p(n_1)$, i.e. the interval $I=[0,2p(n_1)]$. Note that
$$
2p(n_1) \leq 4n_1^k \leq \frac{1}{2}(8n_1)^k \leq p(8n_1) .
$$

Let $x\in I$. We claim that there is $n\in\NN$ such that $p(n)\in I$ and $\abs{x-p(n)}\leq \ep\cdot (8n_1)^k$. If $x<p(n_0)$, then this is satisfied by $n=n_0$ as $\abs{x-p(n_0)}\leq p(n_0)$. Otherwise, let $n$ be the largest integer such that $p(n)\leq x$; then $p(n)\in I$ and, since $x\leq 2p(n_1)\leq p(8n_1)$ and $p$ is increasing in $[0,\infty)$, we have $n_0\leq n\leq 8n_1$. Therefore
$$
\abs{x-p(n)} = x-p(n) \leq p(n+1)-p(n) \leq \ep n^k \leq \ep\cdot (8n_1)^k ,
$$
proving our claim. Therefore $I$ cannot contain a closed ball of radius at least $\ep\cdot (8n_1)^k$ that does not intersect $E\cap I$. It follows that $E$ fails the definition of porosity for all values of $\lambda$ larger than
$$
\frac{\ep\cdot(8n_1)^k}{p(n_1)} \geq \frac{\ep\cdot(8n_1)^k}{2n_1^k} \geq 4^k\ep .
$$
Since $\ep$ was arbitrary, we conclude that $E$ cannot be porous.
\end{proof}

\section*{Acknowledgments}

The author thanks Rub\'en Medina, Andr\'es Quilis and Triinu Veeorg for discussions on the topic of this note.


\end{document}